\documentclass[11pt]{amsart}
\usepackage{amsmath}
\usepackage{amssymb}
\usepackage{amscd}
\usepackage{color}

\usepackage{tikz}
\usetikzlibrary{matrix}
\usetikzlibrary{arrows,calc}
\allowdisplaybreaks
\def\NZQ{\mathbb}               

\def\ZZ{{\NZQ Z}}
\def\RR{{\NZQ R}}

\newtheorem{Theorem}{Theorem}[section]
\newtheorem{Lemma}[Theorem]{Lemma}
\newtheorem{Corollary}[Theorem]{Corollary}

\newtheorem{Remark}[Theorem]{Remark}

\let\epsilon\varepsilon
\let\phi=\varphi
\let\kappa=\varkappa

\begin{document}
\title{Limits of Amao multiplicities and epsilon multiplicity}

\author{Steven Dale Cutkosky}

\thanks{Partially supported by NSF grant DMS-2348849.}

\address{Steven Dale Cutkosky, Department of Mathematics,
University of Missouri, Columbia, MO 65211, USA}
\email{cutkoskys@missouri.edu}

\begin{abstract}
Let $R$ be a $d$-dimensional Noetherian local ring with maximal ideal $m_R$ and $I$ be an ideal of $R$. The epsilon multiplicity $\epsilon(I)$ of $I$ is 
shown in a recent paper of  Stephen Landsittel to be the limit of Amao multiplicities 
$$
\epsilon(I)=
\lim_{m\rightarrow\infty}\frac{a(I^m,(I^m)^{\rm sat})}{m^d}
$$
if the dimension of the nilradical of $\hat R$ is less than $d$.

It is shown in this paper that for an arbitrary local ring $R$ which is of finite type over a field, the limit
$$
\lim_{m\rightarrow\infty}\frac{a(I^m,(I^m)^{\rm sat})}{m^d}
$$
always exists.
\end{abstract}

\keywords{graded family of ideals, Amao multiplicity, epsilon multiplicity, Segre products}
\subjclass[2010]{13H15, 14C17}

\maketitle

\section{Introduction} Let $R$ be a $d$-dimensional Noetherian local ring with maximal ideal $m_R$ and $I\subset R$ be an ideal. The epsilon multiplicity of $I$ is defined in \cite{UV} as 
$$
\epsilon(I)=
\limsup_{n\rightarrow\infty} \frac{\ell_R((I^n)^{\rm sat}/I^n)}{n^d/d!},
$$
where $\ell_R(M)$ is the length on an $R$-module $M$.
The saturation $J^{\rm sat}$ of an ideal $J$ of $R$ is 
$$
J^{\rm sat}=J:m_R^{\infty}=\cup_{i=0}^{\infty}J:m_R^i
$$
where $m_R$ is the maximal ideal of $R$.
It is shown in Corollary 6.3 \cite{C2} that if $R$ is analytically unramified, then the epsilon multiplicity of an ideal in $R$ exists as a limit; that is, 
$$
\epsilon(I)=
\lim_{n\rightarrow\infty} \frac{\ell_R((I^n)^{\rm sat}/I^n)}{n^d/d!}.
$$
However, the epsilon multiplicity can be an irrational number, as shown in \cite{CHST}.

Some recent papers on epsilon multiplicity are Suprajo Das \cite{sD}, Cutkosky and Sarkar \cite{CS}, Suprajo Das, S. Dubey, S. Roy and J. Verma \cite{DDRV} and Suprajo Das, S. Roy and V. Trivedi \cite{DRT}, Cutkosky and Landsittel \cite{CL}, Landsittel \cite{L}.

The  Amao multiplicity is defined and developed in \cite{Am}, \cite{R} and on page 332 of \cite{HS}. Suppose that $R$ is a $d$-dimensional Noetherian local ring and $K\subset J$ are ideals in $R$ such that $J/K$ has finite length. Then $J^n/K^n$ has finite length for all $n\ge 0$ and there exists a numerical polynomial $p(n)$ such that for $n$ sufficiently large, $p(n)=\ell_R(J^n/K^n)$. The Amao multiplicity $a(K,J)$ is defined to be $d!$ times the coefficient of the degree $d$ term of $p(n)$. That is,
$$
a(K,J)=\lim_{n\rightarrow\infty}\frac{\ell_R(J^n/K^n)}{n^d/d!}.
$$
The Amao multiplicity is always a nonnegative integer.

In  Theorem 1.5 \cite{L},  it is  proven that the epsilon multiplicity of an ideal in a Noetherian $d$-dimensional local ring is a limit of Amao multiplicities, if the dimension of the nilradical of $\hat R$ is less than the dimension of $R$.

\begin{Theorem}\label{TheoremA}(Theorem 1.5 \cite{L})  Let $R$ be a  $d$-dimensional Noetherian local ring and let $I\subset R$ be an ideal. 
Suppose that the dimension of the nilradical of $\hat R$ is less than $d$.
Then
$$
\epsilon(I)=
\lim_{m\rightarrow\infty}\frac{a(I^m,(I^m)^{\rm sat})}{m^d}.
$$
\end{Theorem}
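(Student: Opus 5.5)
We want to prove that $a(I^m,(I^m)^{\rm sat})/m^d\to\epsilon(I)$ when $\dim N(\hat R)<d$. The plan is a sandwich argument: bound the Amao multiplicity on both sides by lengths of the form $\ell_R((I^{n})^{\rm sat}/I^{n})$ for the epsilon sequence.

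\textbf{Step 1: reduce to the analytically unramified case.} Pass to $\hat R$. Saturation, lengths and Amao multiplicities are all preserved under completion, since $J^{\rm sat}\hat R=(J\hat R)^{\rm sat}$. Let $N$ be the nilradical of $\hat R$. Because $\dim N<d$, the lengths $\ell(N\cap (I^n)^{\rm sat}/N\cap I^n)$ (and similar correction terms) are $O(n^{d-1})$; they can be bounded by lengths of local cohomology of a module of dimension $<d$ along the Rees algebra. So both $\epsilon(I)$ and the Amao limits may be computed in $\hat R/N$, which is reduced and analytically unramified. There Corollary 6.3 of \cite{C2} shows $\epsilon(I)$ is an honest limit. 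I would also reduce to $R$ excellent and reduced, and further to a domain by working componentwise with the minimal primes of dimension $d$, using additivity of the leading coefficient.

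\textbf{Step 2: set up the comparison.} Fix $m$ and write $J_m=(I^m)^{\rm sat}$. By definition,
$$a(I^m,J_m)=\lim_n \frac{\ell(J_m^n/I^{mn})}{n^d/d!}.$$
Since $J_m^n\subset (I^{mn})^{\rm sat}$, we get
$$\frac{a(I^m,J_m)}{m^d}\le \lim_n\frac{\ell((I^{mn})^{\rm sat}/I^{mn})}{(mn)^d/d!}=\epsilon(I).$$
This gives the upper bound for every $m$.

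\textbf{Step 3: the lower bound (main obstacle).} We need
$$\liminf_m \frac{a(I^m,J_m)}{m^d}\ge \epsilon(I),$$
that is, the products $J_m^n$ must asymptotically fill out $(I^{mn})^{\rm sat}$. The family $\{(I^k)^{\rm sat}\}$ is a graded family, and its saturation behaves like $\Gamma$ of powers of $\mathcal O(1)$ on the blowup away from the closed point. I would use the Okounkov body / Newton–Okounkov approach of \cite{C2}. Choose a valuation $\nu$ (from a flag on a normalization) giving semigroups
$$\Gamma=\{(\nu(f),k): f\in (I^k)^{\rm sat}\}\quad\text{and}\quad \Gamma'=\{(\nu(f),k): f\in I^k\}.$$
Lengths of the quotients are then counted by lattice points lying between the cones, truncated at a bounded level $\nu$-value $c\,k$, which holds because $m_R^{ck}\cap (I^{k})^{\rm sat}\subset I^k$ up to linear bounds (Swanson/Rees-type linear comparison in the analytically unramified setting).

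The subsemigroup generated by level $m$ of $\Gamma$ corresponds to $J_m^n$, or contains the values of $J_m^n$. Standard Okounkov-body approximation (Lazarsfeld–Mustaţă, Kaveh–Khovanskii) gives that the volume of the region generated by level $m$ converges to the volume of the full region as $m\to\infty$. Hence
$$\lim_n \frac{\ell(R/ J_m^n\cap m_R^{c mn})}{(mn)^d}\to \text{the corresponding limit for } \{(I^k)^{\rm sat}\},$$
while the terms involving $I^{mn}$ are exact.

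The delicate point is the truncation. We need $\ell(J_m^n/I^{mn})$ to be expressible as a difference of colengths intersected with a fixed-rate power $m_R^{c mn}$, with $c$ independent of $m$. This needs the uniform linear bound $(I^{k})^{\rm sat}\cap m_R^{ck}\subseteq I^k$ for all $k$, which is the analytically unramified input (via Rees valuations/Izumi) and must be checked carefully. Combining Steps 2 and 3 yields the limit $\epsilon(I)$.
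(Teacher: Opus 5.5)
Your outline follows the route the paper attributes to \cite{L} and \cite{CL}: reduce to the analytically unramified case, then use an Okounkov-body approximation. Step 2 is correct as written.

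The genuine gap is in Step 1. Saturation does not commute with passing from $\hat R$ to $\bar R=\hat R/N$, or to the components $\hat R/P_i$. One only has $(I^m)^{\rm sat}\bar R\subseteq (I^m\bar R)^{\rm sat}$, possibly strictly. Your appeal to Corollary 6.3 of \cite{C2} indicates that you re-saturate in $\bar R$. If so, per-$n$ corrections of size $O(n^{d-1})$ do not control the Amao side. For fixed $m$, $a(I^m,\cdot)$ is an $n^d$-coefficient, and enlarging the second ideal by a piece of small length can change it by much more than that length. For example, in a power series ring in $x,y$ over a field with $\mathfrak m=(x,y)$, the ideal $\mathfrak m^t+(x^{t-1})$ has colength one less than $\mathfrak m^t$, but multiplicity $t^2-t$ instead of $t^2$. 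So your assertion that the Amao limits ``may be computed in $\hat R/N$'', and later ``componentwise'', is not established by the argument you give.

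The repair is never to re-saturate.
\begin{itemize}
\item Since $\dim N<d$, $\hat R_P$ is a field at each $d$-dimensional minimal prime $P$. So Remark~\ref{Rem1} gives exactly $a(I^m,J_m)=a(I^m\bar R,J_m\bar R)$ with $J_m=(I^m)^{\rm sat}$.
\item Put $\mathfrak c_k=(I^k)^{\rm sat}\bar R$. The modular law gives exactly $\ell((I^k)^{\rm sat}/I^k)=\ell(\mathfrak c_k/I^k\bar R)+\ell\big((N\cap (I^k)^{\rm sat})/(N\cap I^k)\big)$, and the last term is precisely your $O(k^{d-1})$ correction.
\item Step 3 should then be run for the graded family $\mathfrak c_\bullet$ and its images on the components. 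This works because it only uses multiplicativity and $I^k\bar R\subseteq \mathfrak c_k\subseteq (I^k\bar R)^{\rm sat}$.
\end{itemize}

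There are also three smaller points in Step 3.
\begin{itemize}
\item The containment $(I^k)^{\rm sat}\cap m_R^{ck}\subseteq I^k$ is Swanson's linear bound, valid in every Noetherian local ring. Together with $J_m^n\subseteq (I^{mn})^{\rm sat}$ it immediately gives $J_m^n\cap m_R^{cmn}=I^{mn}\cap m_R^{cmn}$. So this is not the delicate point.
\item The genuinely analytic input is an Izumi-type containment of a $\nu$-half-space $\{f:\nu(f)\ge\alpha k\}$ in $m_R^{ck}$ on an analytically irreducible component. This lets you replace truncation by $m_R^{cmn}$ with a half-space truncation of the value semigroup, which is additive. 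Truncation by $m_R^{cmn}$ is not multiplicative: in general $(J_m+m_R^{cm})^n\not\subseteq J_m^n+m_R^{cmn}$.
\item The direction you need is that the truncated value set of $J_m^n$ contains the $n$-fold sums of truncated level-$m$ values, not the converse. The quantity to count is $\ell(J_m^n/J_m^n\cap m_R^{cmn})$, or its valuative analogue. The quantity $\ell(R/J_m^n\cap m_R^{cmn})$ you wrote is infinite when $I$ is not $m_R$-primary.
\end{itemize}

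With these changes your argument becomes a proof along the cited route.
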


Theorem \ref{TheoremA} 
 is  a generalization of  the volume equals multiplicity formula of ordinary multiplicity to epsilon multiplicity. The volume = multiplicity formula of ordinary multiplicity is  proven in increasing generality in Ein, Lazarsfeld and Smith \cite{ELS}, Musta\c{t}\u{a} \cite{M1}, Lazarsfeld and Musta\c{t}\u{a} \cite{LM} and  Cutkosky, Theorem 6.5 \cite{C2} and Theorem 1.4 \cite{C4}. Recently, volume = multiplicity formulas have been proven for $p$-families and weakly graded families, by Sudipta Das \cite{SD}, Sudipta Das and Cheng Meng \cite{DM} and Sarkar \cite{PS}.
 
 The proof of Theorem \ref{TheoremA} in \cite{L} is by reduction to the case that $R$ is analytically unramified. The case when $R$ is analytically unramified is proven in Theorem 1.1 \cite{CL} using Okounkov body methods.

In this paper, we show that the limit 
$$
\lim_{m\rightarrow\infty}\frac{a(I^m,(I^m)^{\rm sat})}{m^d}
$$
 exists generally in rings which are essentially of finite type over a field, even when they are not generically reduced, as is required in the equality of Theorem \ref{TheoremA}. 

\begin{Theorem}\label{TheoremAmlim}(Theorem \ref{Theorem3} of Section \ref{SecAmlim})
Let $R$ be a $d$-dimensional local ring which is essentially of finite type over a field $k$ and $I\subset R$ be an ideal. Then the limit
$$
\lim_{n\rightarrow\infty} \frac{a(I^n,(I^n)^{\rm sat})}{n^d}\in \RR
$$
exists. 
\end{Theorem}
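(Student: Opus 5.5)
Fix $m\ge 1$ and put $J_m=(I^m)^{\rm sat}$. For fixed $m$, the Amao multiplicity $a(I^m,J_m)$ is the normalized leading coefficient of $n\mapsto \ell_R(J_m^n/I^{mn})$. The plan is to rewrite this length as a difference of lengths relative to a common $m_R$-primary-free reference family, so that everything becomes a statement about one graded family in a bigraded algebra, and then to apply volume (Okounkov body / limit) results for multigraded families over a field.

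\textbf{Step 1 (reduction to a difference of two families).} Since $I^{mn}\subset J_m^n\subset (I^{mn})^{\rm sat}$, we have
$$
\ell_R(J_m^n/I^{mn})=\ell_R\bigl((I^{mn})^{\rm sat}/I^{mn}\bigr)-\ell_R\bigl((I^{mn})^{\rm sat}/J_m^n\bigr).
$$
The first term is handled by the existing theory for epsilon-type functions. Over a ring essentially of finite type over a field I would not assume analytic unramifiedness. Instead, I would pass to $R_{\rm red}$, or filter $R$ by powers of the nilradical $N$, with $N^c=0$. Each graded piece $N^iI^{mn}\cap\cdots$ is then a module over the reduced ring, and the limits of the corresponding $\ell$-functions over $n^d$ exist by Cutkosky's theorem on limits of lengths of graded families of modules over rings essentially of finite type (the $R$ of finite type setting avoids the analytic-unramifiedness hypothesis). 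The crucial structural point is that all the relevant families ($I^{mn}$, $J_m^n$, their saturations, and their intersections with $N^i$) are graded families in the two indices $(m,n)$.

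\textbf{Step 2 (two-parameter asymptotics).} The key claim to prove is that the function $f(m,n)=\ell_R(J_m^n/I^{mn})$ has the following property: the limit $\lim_n f(m,n)/n^d$ equals $a(I^m,J_m)/d!$, and this value, viewed as a function of $m$, satisfies an approximate superadditivity. Consider the algebra $\bigoplus_{m,n} J_m^n t^{mn}$-type bigraded structures, or more simply the graded family $\{J_m^n\}$ for fixed $m$ inside $\{(I^{k})^{\rm sat}\}_k$ with $k=mn$. I would realize $a(I^m,J_m)/(m^d d!)$ as the difference of volumes
$$
{\rm vol}\{(I^k)^{\rm sat}\}-{\rm vol}_m,
$$
where ${\rm vol}_m$ is the volume of the Veronese-type subfamily generated by $J_m$. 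I would do this via Okounkov bodies, by choosing a valuation on each reduced component, as in \cite{CL} and \cite{C2}, and handling nilpotents by the filtration from Step 1.

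\textbf{Step 3 (monotone convergence).} The families generated by $J_m$ increase (up to the $m$-scaling) to the full saturated family: $J_{m}J_{m'}\subset J_{m+m'}$. Hence the semigroups underlying the Okounkov bodies $\Delta_m$ for the family generated by $J_m$ increase under divisibility, and their union is dense in the body of $\{(I^k)^{\rm sat}\}$. Standard approximation of Okounkov bodies by finitely generated subsemigroups (Lazarsfeld–Mustaţă style, as in \cite{C2}) then gives convergence of ${\rm vol}_m$ along all $m$, not just along divisibility chains, because superadditivity plus boundedness forces a Fekete-type limit. Therefore $a(I^m,J_m)/m^d$ converges to $d!\bigl({\rm vol}(\text{sat family})-{\rm vol}(\text{power family})\bigr)$ on each piece of the nilradical filtration. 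Summing the finitely many pieces gives the theorem.

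\textbf{Main obstacle.} The hardest step is the nonreduced case in Step 2. When $\dim N=d$, the Amao multiplicity of $I^m$ need not split additively along the filtration $N^iR/N^{i+1}R$: the pieces $J_m^n\cap N^i$ are not powers of anything. I would try to handle this by comparing with the Segre-product/Rees-module construction, where $\bigoplus_n J_m^n\cap N^i/ J_m^n\cap N^{i+1}$ is a finitely generated module over the Rees algebra of $J_m$ on $R_{\rm red}$. Its multiplicity is then a finite sum of mixed-volume terms, each of which is shown to converge in $m$ by the Step 3 argument.
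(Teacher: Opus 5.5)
Your proposal has a genuine gap, and it lies exactly in the case that gives the theorem its content.

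When $\dim N(\hat R)<d$ the statement already follows from Theorem \ref{TheoremA}. In the reduced case your Steps 2--3 essentially re-derive it by the Okounkov-body method of \cite{CL}. The new case is $\dim N(\hat R)=d$. There your Step 1 needs the limits $\lim_k \ell_R((I^k)^{\rm sat}/I^k)/k^d$ and the companion length limits to exist. You attribute this to a theorem for rings essentially of finite type over a field, but no such result is available. Being of finite type over a field does not remove nilpotents, and \cite{C2} shows that $\dim N(\hat R)<d$ is precisely the condition under which limits of lengths of graded families of $m_R$-primary ideals exist in general.

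Filtering by powers of $N$ does not rescue this. As you observe yourself, the pieces $J_m^n\cap N^i$ are not graded families of ideals on $R_{\rm red}$ to which volume theorems apply, and your ``main obstacle'' paragraph is a plan rather than an argument. Moreover, by routing everything through ${\rm vol}\{(I^k)^{\rm sat}\}$ you are in effect trying to prove that the limit equals $\epsilon(I)$. That is exactly the equality known only under the nilradical hypothesis. Separately, the identity displayed in Step 2 is misstated: ${\rm vol}\{(I^k)^{\rm sat}\}-{\rm vol}_m$ measures, up to sign, the defect $\lim_n\ell_R((I^{mn})^{\rm sat}/J_m^n)/(mn)^d$, not $a(I^m,J_m)/(m^d d!)$.

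The idea you are missing is to avoid lengths of graded families on $R$ altogether.
\begin{itemize}
\item The paper uses Theorem \ref{Theorem1} (Kleiman--Thorup) to write $a(I^n,(I^n)^{\rm sat})$ as a sum of Segre products $((-D_n-E_n)^{i-1}\cdot(-E_n)^{d-i}\cdot D_n)$ on the blowup $B_n$. These are evaluated on the fundamental cycle $[B_n]_d$, so nilpotents enter only through the multiplicities $\ell_{R_{P_i}}(R_{P_i})$ (Remark \ref{Rem1}).
\item After spreading out to a projective $k$-variety $\overline X$ and twisting by an ample $H$ avoiding $y_0$, each Segre product becomes a difference of intersection numbers $(U_n^a\cdot(-n\overline A)^b)$. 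Here $U_n=-n\overline A+D_n$ is nef (Lemma \ref{Lemma1}) and $-\overline A$ is nef.
\item The graded-family property of saturations gives $U_m+U_n\le U_{m+n}$. Monotonicity of intersection numbers of nef divisors (Lemma \ref{Lemma4}) and the bound $U_n\le n(H+H')$ then give a Fekete-type argument for each limit.
\end{itemize}

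If you prefer to keep an Okounkov-body route, the natural reduction is the associativity formula of Remark \ref{Rem1} over the $d$-dimensional minimal primes, applied for each fixed $m$, rather than the $N$-adic filtration. You would then still need a version of \cite{CL} on the domains $R/P_i$ for the graded family of images of $(I^m)^{\rm sat}$. These images need not be the saturations of $I^m(R/P_i)$.
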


It follows from Theorem 5.7 \cite{KT} that Amao multiplicities $a(I,J)$ can be computed using intersection theory, specifically as a sum of suitable Segre products obtained by blowing up $I$ and $J$. This expression of $a(I,J)$ is given in Theorem \ref{Theorem1} of Section \ref{AmaoSegre}.

The Segre products determining $a(I^n,(I^n)^{\rm sat})$ are intersection products of Cartier divisors $F$ and $E_n$ obtained by blowing up $I$ and $(I^n)^{\rm sat}$. 
In Section \ref{SecComp}, we use the fact that $\{(I^n)^{\rm sat}\}$ is a graded family of ideals, giving the analogous statement about Cartier divisors that
$E_m+E_n\ge E_{m+n}$. We use this fact to show that the limit of the Segre products of $nF$ and $E_n$ divided by $n^d$ have a limit in Section \ref{SecComp}, from which Theorem \ref{TheoremAmlim} is deduced in Section \ref{SecAmlim}.  

In \cite{C5}, limits of intersection products of Cartier divisors obtained by blowing up $I_n$ and dividing by $n$ are used to   show that the right hand side 
$$
\lim_{n\rightarrow\infty}\frac{e(I_n)}{n^d}
$$
of the volume = multiplicity formula for a graded family of $m_R$-primary ideals $\{I_n\}$ on a Noetherian local ring always exists. The proof is by interpreting  the multiplicity $e(I_n)$ as an intersection product on the blowup of $I_n$.

For $I$ an ideal in a $d$-dimensional Noetherian local ring $R$, let
$$
a\epsilon(I)=\limsup_{n\rightarrow\infty}\frac{a(I^n,(I^n)^{\rm sat})}{n^d}.
$$
Theorem \ref{TheoremA} shows that $a\epsilon(I)=\epsilon(I)$ if the dimension of the nilradical of $\hat R$ is less then $d$ and Theorem \ref{TheoremAmlim} shows that
$$
a\epsilon(I)=\lim_{n\rightarrow\infty}\frac{a(I^n,(I^n)^{\rm sat})}{n^d}
$$
if $R$ is essentially of finite type over a field. 

$a\epsilon(I)$ satisfies the following additivity formula, which follows from the proof of Theorem \ref{TheoremAmlim} and Remark \ref{Rem1}.

\begin{Corollary}
With the notation of Theorem \ref{TheoremAmlim}, let $P_1,\ldots ,P_r$ be the minimal prime ideals of $R$ such that $\dim R/P_i=d$. Then
\begin{equation}\label{eq25}
a\epsilon(I)=\sum_{i=1}^r\ell_{R_{P_i}}(R_{P_i})a\epsilon(I(R/P_i)).
\end{equation}
\end{Corollary}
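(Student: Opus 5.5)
The plan is to prove the additivity at each finite level $n$ and then pass to the limit provided by Theorem \ref{TheoremAmlim}. For fixed $n$, write $a(I^n,(I^n)^{\rm sat})$ as in Theorem \ref{Theorem1}: a sum of Segre products of the divisors $nF$ and $E_n$ on a birational model $X_n$ obtained by blowing up $I$ and $(I^n)^{\rm sat}$. These are degree-$d$ intersection numbers on the $d$-dimensional part of a scheme over $R$, so they depend only on the $d$-dimensional cycle $[X_n]_d$, or equivalently on the fundamental cycle $[\mathrm{Spec}\, R]_d=\sum_{i=1}^r \ell_{R_{P_i}}(R_{P_i})[\mathrm{Spec}\, R/P_i]$. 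This is what Remark \ref{Rem1} is meant to record.

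\textbf{Step 1 (reduction to the integral components).} Let $X_n^{(i)}$ be the components of $X_n$ dominating $\mathrm{Spec}\, R/P_i$. Since the blowup is an isomorphism over the generic points, the multiplicity of $X_n^{(i)}$ in $[X_n]_d$ is $\ell_{R_{P_i}}(R_{P_i})$. Components over minimal primes with $\dim R/P<d$ contribute nothing in degree $d$. By the projection formula and the additivity of intersection numbers over the fundamental cycle, each Segre product of $nF$ and $E_n$ on $X_n$ equals
$$\sum_{i=1}^r \ell_{R_{P_i}}(R_{P_i})\,(\text{the same product restricted to }X_n^{(i)}).$$

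\textbf{Step 2 (comparison with $R/P_i$).} $X_n^{(i)}$ maps birationally onto the blowup of $I(R/P_i)$ and of the image of $(I^n)^{\rm sat}$ in $R/P_i$. However, that image need not equal $(I^n(R/P_i))^{\rm sat}$; in general one only has
$$(I^n)^{\rm sat}(R/P_i)\subset (I^n(R/P_i))^{\rm sat}.$$
I would therefore run the limit argument of Section \ref{SecComp} directly on $R/P_i$ with both graded families. The corresponding divisors $E_n^{(i)}$ and $E_n'$ satisfy $E_n'\le E_n^{(i)}$. To show their normalized Segre products have the same limit, I would exploit that both families agree with $I^n$ away from the closed point. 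On $R/P_i$, the saturation is determined by the components of the Rees algebra not lying over $m_R$. Hence the two families differ only in how the exceptional divisors over $m_R$ are cut off, and those contributions should be controlled in the limit by the characterization of the limit through the valuations/Cartier divisors used in Section \ref{SecComp}. Concretely, I would try to show that for each Rees valuation $v$ centered off $m_R$ one has
$$\lim_n v\big((I^n)^{\rm sat}(R/P_i)\big)/n=\lim_n v\big((I^n(R/P_i))^{\rm sat}\big)/n,$$
and that the limit intersection numbers depend only on these asymptotic orders.

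\textbf{Step 3 (passing to the limit).} Divide by $n^d$ and take $n\to\infty$. Theorem \ref{TheoremAmlim} applies both to $R$ and to each $R/P_i$, so all the limits exist and equal the corresponding $a\epsilon$. This gives \eqref{eq25}.

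The main obstacle is Step 2: the mismatch between the image of the saturation and the saturation of the image. I expect it to require checking that the limiting Segre products are insensitive to this discrepancy, i.e. that the asymptotic multiplicities of $E_n$ along the relevant prime divisors agree for the two families.
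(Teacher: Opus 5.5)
Your Steps 1 and 3 follow the paper's route. The paper gets the Corollary by applying the component decomposition of Remark \ref{Rem1} to the Segre products in the proof of Theorem \ref{Theorem3}, and then passing to the limit as in Theorem \ref{Theorem2}. You are also right that this alone computes the limit for the family $\mathcal J'_n=(I^n)^{\rm sat}(R/P_i)$, not for $\mathcal J_n=(I^n(R/P_i))^{\rm sat}$, which is the family that defines $a\epsilon(I(R/P_i))$. The paper's one-line derivation does not discuss this point.

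Your Step 2, however, does not close the gap. The equality you propose is for valuations centered off $m_R$. There $I^n$, $\mathcal J'_n$ and $\mathcal J_n$ all agree locally, so the equality is vacuous. The two families differ only along divisors lying over $m_R$. Nothing in your plan controls the difference there, or shows that the normalized Segre numbers of the two families have the same limit. As written, Step 2 assumes its conclusion.

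\textbf{The missing input: a comparison uniform in $n$.} Since $R_{P_i}$ is Artinian, there are $e\ge 1$ and $s\in R\setminus P_i$ with $sP_i^e=0$. Suppose $m_R^kf_j\subset I^n+P_i$ for $j=1,\ldots,N$. Then
$s\,m_R^{kN}f_1\cdots f_N\subset s(I^n+P_i)^N=\sum_{j<e}sI^{n(N-j)}P_i^j\subset I^{n(N-e+1)}$.
Hence $\bar s\,\mathcal J_n^N\subset \mathcal J'_{n(N-e+1)}$ for all $N\ge e$, with $s$ and $e$ independent of $n$ and $N$.

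\textbf{Converting this into equality of limits.}
\begin{itemize}
\item The sum $\sum_i s_i(n)$ in the proof of Theorem \ref{Theorem3} telescopes to $(U_n^d)-n^d((-\overline A)^d)$, where $U_n=nH-\overline E_n$ is nef. So only $\lim (U_n^d)/n^d$ matters.
\item Work on the closure of $\mathrm{Spec}(R/P_i)$ in $\overline X$. It is integral, so $\mathrm{div}(\bar s)$ is Cartier on its models. Fix an effective divisor $H_s$ not passing through $y_0$ with $\mathrm{div}(\bar s)+H_s\ge 0$.
\item Put $M=n(N-e+1)$. The inclusion above gives $U'_M+n(e-1)H+H_s+\mathrm{div}(\bar s)\ge NU_n$, and both sides are nef.
\item Lemma \ref{Lemma4}, together with the bound (\ref{eq14}), then gives $(U_M'^d)\ge N^d(U_n^d)-O(N^{d-1})$ for fixed $n$.
\item Hence the limit of $(U_M'^d)/M^d$, which exists by the argument of Theorem \ref{Theorem2}, is at least $(U_n^d)/n^d$ for every $n$.
\item The reverse inequality follows from $\mathcal J'_n\subset\mathcal J_n$ and Lemma \ref{Lemma4}.
\end{itemize}
This identifies the two limits and yields (\ref{eq25}).
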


\section{Amao multiplicities and Segre products}\label{AmaoSegre}

\subsection{Intersection Theory} We use the intersection theory defined by Thorup in \cite{Th}. This intersection theory depends on the choice of a ``grading'' on a Noetherian base scheme $X$, which gives  an ``induced grading'' on schemes of finite type over $X$. On a general Noetherian scheme $X$, 
the ``canonical grading'' is defined by $\delta(V)=-\dim\mathcal O_{X,V}$ if $V\subset X$ is a closed integral subscheme of $X$, where $\mathcal O_{X,V}$ is defined to be the stalk $\mathcal O_{X,\eta}$ at the generic point $\eta$ of $V$.
If $X$ has finite Krull dimension, then $X$ has the ``topological grading'', which is defined by $\delta(V)=\dim V$, if $V\subset X$ is a closed integral subscheme of $X$. We will assume from now on that $X$ has the topological grading. In this manuscript, we will be concerned with $X=\mbox{Spec}(R)$ where $R$ is a Noetherian local ring, which has finite Krull dimension.

If $U\subset X$ is a subset, then $\delta(U)$ is defined to be $\delta(U)=\sup_{u\in U}\delta(\overline{\{u\}})$. Suppose that $f:Y\rightarrow X$ is essentially of finite type. 
Then the induced grading on $Y$ is defined for $V\subset Y$ a closed  integral subscheme by
$$
\delta(V)=\mbox{trdeg}_{R(W)}R(V)+\dim W
$$
where $W$ is the closure of $f(V)$ in $X$ and $R(V)$, $R(W)$ are the respective function fields of $V$ and $W$. Thus $\dim V\le \delta(V)$ by the dimension inequality (Theorem 23, page 96 \cite{Ma}), with equality if $X$ is universally catenary.

If $f:Y\rightarrow X$ is proper, then $\delta(V)=\dim V$ for all integral subschemes $V$ of $Y$. This is proven in Proposition 4.3 \cite{Th}.

In this manuscript, we will exclusively be concerned with proper morphisms $Y\rightarrow X=\mbox{Spec}(R)$, where $R$ is a Noetherian local ring. Thus we may define $Z_r(Y)$ to be the group of cycles of dimension $r$ on $Y$, which map to $A_r(Y)$, which is the quotient of $Z_r(Y)$ by the rational equivalence defined in \cite{Th}. The theory of rational equivalence defined in \cite{Th} determines an intersection theory on $Y$, which we will use.
 
 If $S$ is a cycle on $Y$, we define $[S]_r$ to be the $r$-dimensional part of $S$.

 If $f:Z\rightarrow Y$ is a morphism of finite type $X$-schemes, and $V$ is an integral subscheme of $Z$ and $W=f(V)$, then
 $$
 f_*(V)=
 \left\{
 \begin{array}{ll} 
 [R(W):R(V)]W&\mbox{ if }\dim W=\dim V\\
 0&\mbox{ otherwise}
 \end{array}\right.
 $$
 where $R(V)$ and $R(W)$ are the respective function fields of $V$ and $W$. This induces a map on cycles.
 
In Proposition 6.5 \cite{Th}, the following basic fact is proven.  Let $Z$ and $Y$ be proper $X$-schemes and $f : Z\rightarrow  Y$
be a proper $X$-morphism. Then the homomorphism $f_* : Z_k(Z)\rightarrow Z_k(Y)$ defined above induces a
functorial homomorphism $f_* : A_k(Z/X)\rightarrow A_k(Y/X)$ for all $k$. 

Let $(R,m_R)$ be a $d$-dimensional Noetherian local ring and $X=\mbox{Spec}(R)$. Suppose that $Y\rightarrow X$ and $W\rightarrow X$ are  projective $R$-schemes which are birationally equivalent to $X$ such that there exists an $R$-morphism $\beta:Y\rightarrow W$. If $U$ is a Cartier divisor on $W$, we will identify $\beta^*(U)$ with $U$. We are able to do this since intersections products are preserved by pull backs of Cartier divisors. 

If $F_1,\ldots,F_d$ are Cartier divisors on $Y$, we will denote the intersection product
$$
\int F_1\cdot\ldots\cdot F_d\cdot Y \mbox{ by }
(F_1\cdot\ldots\cdot F_d).
$$

\subsection{Amao multiplicities}
Suppose that $(R,m_R)$ is a Noetherian local ring of dimension $d$ and $K\subset J$ are ideals of $R$ such that the length $\ell_R(J/K)<\infty$.

We first introduce notation and review some material from the discussion on page 172 of \cite{KT}. Let $G'\subset G$ be the graded rings $G'=\sum_{n\ge 0}K^n$ and $G=\oplus_{n\ge 0}J^n$. Let $X=\mbox{spec}(R)$, $P=\mbox{Proj}(G)\stackrel{\pi}{\rightarrow} X$ be the blowup of $J$ and $P'=\mbox{Proj}(G')\rightarrow X$ be the blowup of $K$. We have that
$G_1'G=KG(-1)$ and 
\begin{equation}
\label{eq9}
\mathcal O_P(1)=J\mathcal O_P=\mathcal  O_P(-E)
\end{equation}
 for some effective Cartier divisor $E$ on $P$. Thus the sheafification of $G_1'G$ is
$$
\widetilde{G_1'G}\cong \widetilde{KG(-1)}\cong (\widetilde{KG})\otimes\mathcal O_P(-1)\cong (K\mathcal O_P)\otimes \mathcal O_P(E).
$$
Now $K\mathcal O_P\subset J\mathcal O_P=\mathcal O_P(-E)$. Thus $\widetilde{G_1'G}=\mathcal I_Z$ for the closed subscheme $Z$ of $P$ with ideal sheaf $\mathcal I_Z=(K\mathcal O_P)\otimes \mathcal O_P(E)$. Since $K_q=J_q$ for $q\in X\setminus \{m_R\}$, we have that $\mathcal I_{Z,y}=\mathcal O_{P,y}$ for $y\in X\setminus \pi^{-1}(m_R)$. Thus $Z$ is supported in $\pi^{-1}(m_R)$. 

Let $b:B\rightarrow P$ be the blowup of $Z$. Then $\mathcal I_Z\mathcal O_B=\mathcal O_B(-D)$ for some effective Cartier divisor $D$ on $B$, whose support contracts to $m_A$. Then
\begin{equation}\label{eq8}
K\mathcal O_B=(\mathcal I_Z\mathcal O_B)\otimes b^*\mathcal O_P(-E)\cong \mathcal O_B(-D-\pi^*E)
\end{equation}
is an invertible sheaf, so there is a natural morphism of $X$-schemes $\alpha:B\rightarrow P'$. 

Let $S=[\tilde G]_d=[\mathcal O_P]_d\in A_d(P)$. Write $S=\sum_{j=1}^ln_j[S_j]$ where $S_j$ are the integral components of $P$  of maximal dimension $d$. Now $\dim \pi^{-1}(m_A)\le d-1$ so no components of $S$ are contained in the support of $Z$, and so $S^Z=0$, where $S^Z$ is the cycle of components of $S$ which are supported in $Z$. Further, $b^+S=\sum_{j=1}^ln_j[b^+S_j]$ where $b^+S_j$ is the closure of $b^{-1}(S_j\setminus Z)$ in $B$. Thus $b^+S=B$. 

The following theorem is a consequence of Theorem 5.7 \cite{KT}, giving an interpretation of the Amao multiplicity $a(K,J)$ in terms of intersection products on $B$.

\begin{Theorem}\label{Theorem1}(Theorem 5.7 \cite{KT})
The Amao multiplicity $e=a(K,J)$, which is the limit
$$
\lim_{n\rightarrow \infty}\frac{\ell_R(J^n/K^n)}{n^d}=d!e,
$$
satisfies  $e=\sum_{i=1}^ds_i$, with $s^i=\int (\ell')^{i-1}\cdot \ell^{d-i}\cdot D\cdot B$, where $\ell'=\alpha^*\mathcal O_{P'}(1)\cong \mathcal O_B(-D-\pi^*E)$ and $\ell=b^*\mathcal O_P(1)\cong \mathcal O_B(-(b\pi)^*E)$.
\end{Theorem}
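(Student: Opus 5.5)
We would derive the formula by specializing Theorem 5.7 of \cite{KT} to the setup above, so the plan is to check that the objects $G'$, $G$, $P$, $P'$, $Z$, $B$, $D$ are exactly those of \cite{KT}. Then we show that the sum of Segre classes there equals $\sum_i s_i$ with the stated line bundles. Concretely, \cite{KT} expresses $\lim \ell_R(J^n/K^n)/(n^d/d!)$ via the graded $G$-module $G/G'$, or rather via the filtration by $G_1'^{\,j}G$. This leads to a sum of degrees of Segre classes of the subscheme $Z\subset P$ defined by the ideal sheaf $\widetilde{G_1'G}$, twisted by powers of $\mathcal O_P(1)$.

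The first step is to confirm the identifications already made in the text. We have $\widetilde{G_1'G}=\mathcal I_Z=(K\mathcal O_P)\otimes\mathcal O_P(E)$, $Z$ is supported over $m_R$, and $b^+S=B$ with $S=[\mathcal O_P]_d$. These ensure that the Segre class $s(Z,S)$ is computed on the blowup $B$ as
$$
s(Z,S)=\sum_{k\ge1}(-1)^{k-1}b_*(D^k\cdot [B]).
$$
Here the distinguished part $S^Z$ vanishes. The second step is to use \eqref{eq8}: $K\mathcal O_B=\mathcal O_B(-D-\pi^*E)$ gives $\ell'=\alpha^*\mathcal O_{P'}(1)$, while $\ell=b^*\mathcal O_P(1)=\mathcal O_B(-E)$. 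The twisting in \cite{KT} uses $\mathcal O_P(1)$, which corresponds to $\ell$, and the Segre classes come with the divisor $D=\ell-\ell'$ as a Cartier class.

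The third step is the algebraic rearrangement. The formula of \cite{KT} has the shape
$$
e=\sum_{j}\int c(\mathcal O_P(1))^{\cdot}\cap s(Z,S)
$$
and a binomial expansion in $\ell$ and $D$ turns it into $\sum_{i=1}^d\int(\ell-D)^{i-1}\ell^{d-i}D$. This rests on the telescoping identity
$$
\ell^{d}-(\ell')^{d}=\sum_{i=1}^{d}(\ell')^{i-1}\ell^{d-i}(\ell-\ell').
$$
With it, $e$ becomes $\sum_i (\ell')^{i-1}\ell^{d-i}D$, which is $\sum s_i$. A heuristic check: $\ell_R(J^n/K^n)=\ell(R/K^n)-\ell(R/J^n)$ formally, and by projection formula each term is $(\ell')^d$ or $\ell^d$ on $B$. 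Since each $D$-containing product is supported over $m_R$ and is proper, the degrees make sense.

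The main obstacle is matching conventions and signs. We must align Thorup's intersection theory with the grading used in \cite{KT}, and check that the pullbacks of $\mathcal O_P(1)$ and $\mathcal O_{P'}(1)$ are correctly identified on $B$. The sign convention $\mathcal O(1)=\mathcal O(-E)$ against the intersections with $D$ needs particular care. We would resolve this first by testing the case $d=1$, or $K=m_R^2\subset J=m_R$ in a regular local ring, to fix signs before doing the general rearrangement.
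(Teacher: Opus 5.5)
Your route is the paper's route: the paper's entire proof is an appeal to Theorem 5.7 of \cite{KT}, in the setup of Section \ref{AmaoSegre}. That setup already records $Z\subset\pi^{-1}(m_R)$, $S^Z=0$ and $b^+S=B$, as you also note. What your plan omits is the one thing the paper actually checks. The theorem of \cite{KT} involves an integer $k_0$, and the formula in the stated form (with nothing extra) is obtained by taking $k_0=0$. This is allowed because $K_q=J_q$, hence $(K^n)_q=(J^n)_q$, for every $n>0$ and every prime $q\in X\setminus\{m_R\}$. You never identify or verify this hypothesis. Matching the objects $G',G,P,P',Z,B,D$ with those of \cite{KT} does not by itself license quoting the theorem in the stated form. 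You already have the needed fact in hand, since it is what puts $Z$ over $m_R$, so the repair is one sentence; but it is the sentence the proof turns on.

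Conversely, the step you treat as the heart of the argument is not part of the paper's proof. You guess that the formula of \cite{KT} has a Segre-class shape $\int c(\mathcal O_P(1))^{\cdot}\cap s(Z,S)$ and then convert it by a binomial or telescoping rearrangement. The paper instead reads the sum of the Segre numbers $\int(\ell')^{i-1}\ell^{d-i}D\cdot b^+S$ directly off Theorem 5.7 of \cite{KT} once $k_0=0$. Your algebra is correct: with $\ell'=\ell-D$,
\[
\sum_{i=1}^d(\ell')^{i-1}\ell^{d-i}D=\sum_{k=1}^d(-1)^{k-1}\binom{d}{k}\ell^{d-k}D^k ,
\]
so the rearrangement does no harm. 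As written, however, your proposal rests on a guessed form of the cited result rather than on a check of its hypotheses, and the missing $k_0=0$ verification is what you should supply.
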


\begin{proof} The theorem  follows from Theorem 5.7 \cite{KT}, since  we can take the $k_0$ in that theorem to be 0 since $(K^n)_q=(J^n)_q$ for all $n>0$ if $q\in X\setminus \{m_R\}$.
\end{proof}

\begin{Remark}\label{Rem1} With the notation of Theorem \ref{Theorem1}, let $P_1,\ldots, P_r$ be the minimal primes of $R$ such that $\dim R/P_i=d$. Then
\begin{equation}\label{eq24}
\lim_{n\rightarrow \infty}\frac{\ell_R(J^n/K^n)}{n^d}=\sum_{i=1}^r\ell_{R_{P_i}}(R_{P_i})\lim_{n\rightarrow\infty} \frac{\ell_{R/P_i}(J^n(R/P_i)/K^n(R/P_i))}{n^d}.
\end{equation}
\end{Remark}

\begin{proof} Let $B_i$ be the strict transform of $\mbox{Spec}(R/P_i)$ in $B$ for $1\le i\le r$. As a cycle,
$[X]_d=\sum \ell_{R_{P_i}}(R_{P_i})[R/P_i]_d$, so $[B]_d=\sum \ell_{R_{P_i}}(R_{P_i})[B_i]_d$. Now the Segre products 
$$
s^i=\int (\ell')^{i-1}\cdot \ell^{d-i}\cdot D\cdot [B]_d=\sum \ell_{R_{P_i}}(R_{P_i})\int (\ell')^{i-1}\cdot \ell^{d-i}\cdot D\cdot [B_i]_d
$$
and formula (\ref{eq24}) follows.
\end{proof}

\section{Comparison of Amao multiplicities} \label{SecComp}

Suppose that $I$ is an ideal of $R$ such that $I_p\ne 0$ if $p$ is an ideal of $R$ such that $I_p\ne 0$ if $p$ is an ideal of $R$ such that $\dim R/p=d$.

We will use similar notation when discussing birational morphisms of projective $k$-varieties.

 For $n>0$, let $K$  be $I^n$ and $J$  be $(I^n)^{\rm sat}$, in the notation of  Section \ref{AmaoSegre}. Let   $F$ be the Cartier divisor 
 $I\mathcal O_{P'}=\mathcal O_{P'}(-F)$, where  $P'$ is the blow up of $I$, so that  $\mathcal O_{P'}(-nF)=I^n\mathcal O_{P'}$.
  Then write $P_n$ for $P$,  $B_n$ for $B$,  $E_n$ for  $E$ and $D_n$ for $D$.  
 Thus   $\mathcal O_{B_n}(-E_n)=(I^n)^{\rm sat}\mathcal O_{B_n}$ and 
  \begin{equation}\label{eq17}
  -nF=-D_n-E_n.
  \end{equation}
 
 Now for $m,n> 0$, $(I^m)^{\rm sat}(I^n)^{\rm sat}\subset (I^{m+n})^{\rm sat}$, so making the comparison on a suitable projective $X$-scheme which birationally dominates both $B_m$ and $B_n$, we have that 
 \begin{equation}\label{eq3}
 \begin{array}{l}
 -E_m-E_n\le -E_{m+n},\\
 D_m+D_n=mF-E_m+nF-E_n\le (m+n)F-E_{m+n}=D_{m+n}.
 \end{array}
 \end{equation}
 
 We have commutative diagrams of $X$-morphisms

 \begin{equation}\label{eq4}
 \begin{array}{rccl}
 B_n&\stackrel{b_n}{\rightarrow}&P_n\\
 \alpha_n\downarrow&&\downarrow \pi_n\\
 P'&\stackrel{\beta}{\rightarrow}&X
 \end{array}
 \end{equation}

 We now suppose that $R$ is essentially of finite type over a field $L$. Let $\overline t_1,\ldots,\overline t_r$ be a transcendence basis of $R/m_R$ over $L$, and let $t_1,\ldots,t_r$ be lifts of the $\overline t_i$ to $R$. Then the rational function field $k=L(t_1,\ldots,t_r)$ is contained in $R$ and $R/m_R$ is finite over $k$.
Further, there exists a projective $k$-variety $\overline X$ and a closed point $y_0\in \overline X$ such that $\mathcal O_{\overline X,y_0}=R$. 
 Extend $I$ to an ideal sheaf $\mathcal I$ on $\overline X$ so that $\mathcal I_{y_0}=I$. Then for all $n>0$, $J_n$ extends naturally to an ideal sheaf $\mathcal J_n$ on $\overline X$ such that $(\mathcal J_n)_{y_0}=J_n$ and $(\mathcal J_n)_y=\mathcal I_y$ if $y\in \overline X\setminus \{y_0\}$.
 
The diagrams (\ref{eq4}) extends naturally to  commutative diagrams of birational projective $k$-morphisms
 
 \begin{equation}\label{eq5}
 \begin{array}{rccl}
 \overline B_n&\stackrel{\overline b_n}{\rightarrow}&\overline P_n\\
 \overline\alpha_n\downarrow&&\downarrow\overline\pi_n\\
 \overline P'&\stackrel{\overline{\beta}}{\rightarrow}&\overline X
 \end{array}
 \end{equation}

where $\overline P_n\rightarrow\overline X$ is the blowup of $\mathcal J_n$, $\overline{P'}\rightarrow \overline X$ is the blow up of $\mathcal I$,
 and $E_n$ and $F$ extend naturally to effective Cartier divisors
$\overline E_n$ and $\overline{F}$ on $\overline P_n$ and $\overline P'$ respectively.  $D_n=nF-E_n$ can be regarded as an effective   Cartier divisor on $\overline{B_n}$, which satisfies
$D_n=n\overline F-\overline E_n$.
$Z_n$ can be regarded as a scheme on $\overline P_n$ since it's support contracts to $m_R$. $\overline B_n\rightarrow \overline P_n$ is then the blow up of $\overline Z_n$.

There exists an effective ample Cartier divisor $H$ on $\overline X$ such that $y_0$ is not in the support of $H$ such that $y_0$ is not in the support of $H$ and $-\overline A:= \overline\beta^*(H)-\overline F$ is ample on $\overline{P'}$. Thus
$-\overline A$ is nef on any $R$-scheme which birationally dominates $\overline{P'}$. Then
\begin{equation}\label{eq6}
-n\overline A\cdot D_n=-nF\cdot D_n
\end{equation}
 for all $n$ since $D_n$ contracts to $m_R$. We have that
$$
nH-\overline{E_n}=-n\overline A+D_n
$$
and
\begin{equation}\label{eq7}
(-n\overline A+D_n)\cdot D_n=(-nF+D_n)\cdot D_n=-E_n\cdot D_n.
\end{equation}
\begin{Lemma}\label{Lemma1} $-n\overline A+D_n$ is nef for all $n>0$.
\end{Lemma}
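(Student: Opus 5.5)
The identity $nH-\overline{E_n}=-n\overline A+D_n$ reduces the lemma to showing that $\overline\pi_n^*(nH)-\overline E_n$ is nef on $\overline B_n$ (or on any projective $k$-variety dominating it). Nefness is preserved by pullback, and $\overline E_n$ is pulled back from $\overline P_n$. So it is enough to show that $nH-\overline E_n$ is nef on $\overline P_n$, the blowup of $\mathcal J_n$.

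On $\overline P_n$ we have $\mathcal O_{\overline P_n}(-\overline E_n)=\mathcal J_n\mathcal O_{\overline P_n}$, and this sheaf is $\overline\pi_n$-very ample; it is $\mathcal O_{\overline P_n}(1)$ for the Rees algebra presentation. Hence $\mathcal O_{\overline P_n}(nH-\overline E_n)=\overline\pi_n^*\mathcal O_{\overline X}(nH)\otimes \mathcal J_n\mathcal O_{\overline P_n}$, and it suffices to show that this invertible sheaf is globally generated. Global generation will follow once $\mathcal J_n\otimes\mathcal O_{\overline X}(nH)$ is globally generated on $\overline X$: the surjection $H^0\otimes\mathcal O_{\overline X}\to \mathcal J_n(nH)$ pulls back to a surjection onto $\mathcal J_n\mathcal O_{\overline P_n}\otimes \mathcal O(nH)$.

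To get that generation, I would use the hypothesis that $-\overline A=\overline\beta^*H-\overline F$ is ample on $\overline{P'}$. After replacing $H$ by a multiple at the start if necessary, we may assume $-\overline A$ is very ample, and in particular globally generated. Then $-n\overline A$ is globally generated, so $\mathcal I^n\mathcal O_{\overline{P'}}(nH)$ is globally generated on $\overline{P'}$. Pulling back to $\overline B_n$, the sheaf $\mathcal O_{\overline B_n}(nH-n\overline F)$ is globally generated.

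The remaining gap is to pass from $-nF$ to $-E_n=-nF+D_n$, where $D_n\ge 0$ is supported over $y_0$. The cleanest route avoids global generation for this step and checks nefness curve by curve. Take an integral curve $C\subset\overline B_n$.

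If $C$ is not contained in $\mathrm{Supp}(D_n)$, then $D_n\cdot C\ge 0$ and $-n\overline A\cdot C\ge 0$, so the sum is nonnegative.

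If $C\subset\mathrm{Supp}(D_n)$, then $C$ maps to $y_0$, which lies off $H$, so $nH\cdot C=0$. We then need $-\overline E_n\cdot C\ge 0$. This holds because $C$ lies in a fiber of $\overline\pi_n$ composed with $\overline b_n$, and $-\overline E_n$ is $\overline\pi_n$-ample on $\overline P_n$, hence relatively nef after pullback. Concretely, $-\overline E_n\cdot C=\deg(\mathcal O_{\overline P_n}(1)|_{\overline b_n(C)})\cdot\deg(C/\overline b_n(C))\ge 0$, and this is $0$ if $C$ is contracted.

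With this curve-by-curve argument, the global generation step is not actually needed beyond the nefness of $-\overline A$. The main point requiring care is the relative ampleness of $-\overline E_n$ over $\overline X$ together with the fact that $y_0\notin H$, which kills the $nH$ term on curves over $y_0$. The case split in the previous paragraph handles everything.
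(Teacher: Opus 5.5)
Your final curve-by-curve argument is correct and is essentially the paper's proof. The paper splits curves by whether they contract to $m_R$ rather than by whether they lie in $\mathrm{Supp}(D_n)$, but it uses the same two facts: nefness of $-n\overline A$ together with $D_n\cdot C\ge 0$ off $\mathrm{Supp}(D_n)$, and $H\cdot C=0$ with $-\overline E_n\cdot C\ge 0$ over $y_0$. Your justification of $-\overline E_n\cdot C\ge 0$, via the $\overline\pi_n$-ampleness of $\mathcal O_{\overline P_n}(-\overline E_n)=\mathcal J_n\mathcal O_{\overline P_n}$ and the projection formula, spells out a step the paper leaves implicit; the global-generation detour can simply be deleted.
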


\begin{proof} Suppose that $C$ is a closed curve on $\overline B_n$ which does not contract to $m_R$. Then $C$ is not contained in the support of the effective divisor $D_n$, so $(C\cdot D_n)\ge 0$.  Thus $(C\cdot (-n\overline A+D_n))\ge 0$, since $-n\overline A$ is nef. Suppose that $C$ is a closed curve on $\overline {B_n}$ which  contracts to $m_R$. Then $(C\cdot H)=0$, so $(C\cdot(-n\overline A+D_n))=(C\cdot(-E_n))\ge 0$.
\end{proof}

We will make extensive use of the following lemmas.
The proof of the following lemma is as in Example 1.4.16 \cite{Laz1}.

\begin{Lemma}\label{NefLem} Suppose that $Y\rightarrow \overline X$ is birational and projective and that 
$V$ is a closed  $r$ dimensional closed integral subscheme  of $Y$ which contracts to $m_R$ and $G_1,\ldots, G_{d-r}$ are nef Cartier divisors on $Y$. Then 
$(G_1\cdot\ldots\cdot G_{d-r}\cdot V)\ge 0$.
\end{Lemma}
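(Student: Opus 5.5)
The approach follows Example 1.4.16 of \cite{Laz1}: approximate the nef divisors $G_i$ by ample $\QQ$-divisors, where positivity is clear, and pass to the limit using multilinearity of intersection products. Since $\QQ$ is not defined in the paper, everything is phrased with integer multiples. The argument is by induction on $r=\dim V$; the case $r=0$ is trivial, since then the product is the degree of a $0$-cycle supported at a point, which is nonnegative.

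\textbf{Step 1: reduce to the ample case.} Because $Y\rightarrow\overline X$ is projective and $\overline X$ is projective over $k$, the scheme $Y$ is a projective $k$-scheme. Choose an ample Cartier divisor $A_0$ on $Y$. For each $i$ and each integer $t>0$, the divisor $tG_i+A_0$ is ample, since nef plus ample is ample by Kleiman's criterion on the projective $k$-scheme $Y$; this is where we use that everything is of finite type over $k$. By multilinearity,
$$
(tG_1+A_0)\cdot\ldots\cdot(tG_{d-r}+A_0)\cdot V
$$
is a polynomial in $t$ with leading coefficient $t^{d-r}(G_1\cdot\ldots\cdot G_{d-r}\cdot V)$. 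If every such product is $\ge 0$ for all $t$, then dividing by $t^{d-r}$ and letting $t\rightarrow\infty$ gives the claim. So it suffices to treat ample $G_i$, and after replacing each $G_i$ by a multiple, very ample $G_i$.

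\textbf{Step 2: the very ample case.} For very ample $G_1$, choose an effective divisor in $|G_1|$ that does not contain $V$. Intersecting, $G_1\cdot V$ is represented by an effective $(r-1)$-cycle $\sum a_j[V_j]$ with $a_j\ge 0$, each $V_j\subset V$. Since $V$ contracts to $m_R$, so does each $V_j$. The induction hypothesis applied to $V_j$ and $G_2,\ldots,G_{d-r}$ then gives nonnegativity. Alternatively, one can compute directly: for very ample divisors the product equals the degree of $V$ in the corresponding embedding, which is a positive integer.

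\textbf{Main obstacle.} The delicate point is justifying that Thorup's intersection theory, with the topological grading over the local base, agrees on cycles supported in the fiber over $m_R$ with the usual intersection theory on the projective $k$-scheme $Y$. Only then are Kleiman's theorem and the usual moving argument available. I would handle this by noting that $V$ is proper over the residue field and $\delta$ equals dimension (Proposition 4.3 \cite{Th}), so the products are computed on the complete scheme $V$, where the classical theory applies.
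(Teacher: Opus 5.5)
Your proposal is correct and follows the paper's route; the paper simply defers to Example 1.4.16 of \cite{Laz1}. That argument perturbs each nef $G_i$ to the ample divisor $tG_i+A_0$ (nef plus ample is ample), uses nonnegativity in the (very) ample case, and recovers the nef case from the leading coefficient in $t$, which is your integer-multiple version of taking a limit of ample classes. One bookkeeping point: for the product to be a number, the number of divisors must equal $\dim V$, so the statement's $d-r$ should be read as $r$. With that reading your induction on $\dim V$, passing from $V$ and $G_1,\ldots,G_r$ to the components $V_j$ of $G_1\cdot V$ and $G_2,\ldots,G_r$, is consistent.
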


\begin{Lemma}\label{Lemma4}
Suppose that $Y\rightarrow \overline X$ is birational and projective and that 
$F_i,G_i$ for $1\le i\le d$ are nef Cartier divisors on $Y$ such that  $F_i\ge G_i$ for $1\le i\le d$. Then
$$
(F_1\cdot\ldots\cdot F_d)\ge (G_1\cdot\ldots\cdot G_d).
$$
\end{Lemma}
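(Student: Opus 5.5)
We argue by telescoping, changing one factor at a time. The key identity is multilinearity of intersection products of Cartier divisors on $Y$ (Thorup's theory): for each $1\le i\le d$,
$$
(F_1\cdot\ldots\cdot F_{i-1}\cdot F_i\cdot G_{i+1}\cdot\ldots\cdot G_d)-(F_1\cdot\ldots\cdot F_{i-1}\cdot G_i\cdot G_{i+1}\cdot\ldots\cdot G_d)
=(F_1\cdot\ldots\cdot F_{i-1}\cdot (F_i-G_i)\cdot G_{i+1}\cdot\ldots\cdot G_d).
$$
Summing over $i$ gives
$$
(F_1\cdot\ldots\cdot F_d)-(G_1\cdot\ldots\cdot G_d)=\sum_{i=1}^d(F_1\cdot\ldots\cdot F_{i-1}\cdot (F_i-G_i)\cdot G_{i+1}\cdot\ldots\cdot G_d),
$$
so it suffices to show that each summand is nonnegative.

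Fix $i$ and set $N_i=F_i-G_i$, which is an effective Cartier divisor by hypothesis. Intersecting with an effective Cartier divisor is computed by its associated Weil cycle: $(N_i\cdot\ldots)$ equals the intersection of the remaining $d-1$ divisors with the cycle $[N_i]=\sum_j a_j[V_j]$. Here the $V_j$ are the $(d-1)$-dimensional integral components of the support of $N_i$, and $a_j\ge 0$ are their multiplicities. Hence the $i$th summand is
$$
\sum_j a_j\,(F_1\cdot\ldots\cdot F_{i-1}\cdot G_{i+1}\cdot\ldots\cdot G_d\cdot V_j),
$$
a nonnegative combination of intersections of $d-1$ nef divisors with integral subschemes of dimension $d-1$. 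We would apply Lemma \ref{NefLem} with $r=d-1$ to conclude that each term is $\ge 0$.

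The main obstacle is that Lemma \ref{NefLem} only covers subschemes contracting to $m_R$, while the support of $F_i-G_i$ need not do so. We would handle this in one of two ways. In the applications, the differences (like $D_n$) are supported over $m_R$, and we could add this hypothesis. Alternatively, and more generally, we would note that the proof of Example 1.4.16 of \cite{Laz1} works for any $r$-dimensional integral subscheme $V$ of the projective $k$-variety $Y$. The argument is this: restricted to $V$, each nef $G_j$ is a limit of ample $\mathbb Q$-divisors $G_j+\epsilon A$ with $A$ ample on $Y$. Intersections of ample divisors on $V$ are positive, since they are degrees of complete intersections of very ample multiples. Continuity in $\epsilon$ then gives $\ge 0$.

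The intersection products on $Y\to\overline X$ are then ordinary intersection products on a projective variety over $k$, so the same conclusion holds for every component $V_j$. Summing the nonnegative terms finishes the proof.
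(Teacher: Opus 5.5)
Your proof is correct, and its skeleton is the paper's: telescope by multilinearity, then show each term $(F_1\cdot\ldots\cdot F_{i-1}\cdot(F_i-G_i)\cdot G_{i+1}\cdot\ldots\cdot G_d)$ is nonnegative.

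Where you differ is the positivity input. The paper just cites Lemma \ref{NefLem}. As stated, that lemma only covers subvarieties contracting to $m_R$, and the paper does not address components of $F_i-G_i$ lying elsewhere. You are right to flag this.

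Your second fix is the one actually needed: the argument of Example 1.4.16 of \cite{Laz1} applies to arbitrary subvarieties of the projective $k$-scheme $Y$. Your first fix (adding the hypothesis that the differences lie over $m_R$) would break the paper's applications. In the proof of Theorem \ref{Theorem2}, the lemma is applied to $-n\overline A\le nH$ and $-n\overline A+D_n\le n(H+H')$. The differences there are $n\overline F$ and $nH'+\overline E_n$, which in general have components not lying over $m_R$, for instance those of the ample divisor $H'$.

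The general version also depends on ``nef'' meaning nonnegative on every curve of $Y$, which is what Lemma \ref{Lemma1} checks. With nefness only on curves over $m_R$ it fails. For example, $-E$ on the blowup of a smooth point of a projective surface has degree $1$ on $E$ but degree $-1$ on the strict transform of a smooth curve through the point. This is presumably why Lemma \ref{NefLem} carries its contraction hypothesis, and why the proof of Lemma \ref{Lemma4} needs the unrestricted global statement you supply.
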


\begin{proof} We have that 
$$
\left(F_1\cdot\ldots\cdot F_{i-1}\cdot(F_i-G_i)\cdot G_{i+1}\cdot\ldots\cdot G_d\right)\ge 0
$$
for $1\le i\le d$ by Lemma \ref{NefLem}.
since $F_i-G_i\ge 0$ and the $F_j$ and $G_j$ are nef.
The lemma now follows from  multilinearity of the intersection product.
\end{proof}

\section{Limits of nef divisors} Let notation be as in Section \ref{SecComp}.
\begin{Theorem}\label{Theorem2}
Let $a,b$ be nonnegative integers such that $a+b=d$. Then
$$
\lim_{n\rightarrow\infty}\frac{((-n\overline A+D_n)^a\cdot(-n\overline A)^b)}{n^d}\in \RR
$$
exists.
\end{Theorem}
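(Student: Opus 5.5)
Write $N_n:=-n\overline A+D_n$ and $f(n):=(N_n^a\cdot(-n\overline A)^b)$. By Lemma \ref{Lemma1}, $N_n$ is nef. Since $-\overline A$ is nef (pulled back from an ample divisor on $\overline{P'}$), so is $-n\overline A$. The plan is to show that $f$ is superadditive, $f(m+n)\ge f(m)+f(n)$, and that $f(n)/n^d$ is bounded, and then apply a Fekete-type argument. The homogeneity in $n$ forces one refinement: superadditivity of $f(n)$ itself is too weak for a degree-$d$ normalization, so I will compare $f(m+n)$ with multilinear expansions rather than with $f(m)+f(n)$.

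\textbf{Monotonicity.} Work on a common projective birational model $Y$ dominating $\overline B_m$, $\overline B_n$ and $\overline B_{m+n}$, and pull everything back to $Y$. By (\ref{eq3}), $D_m+D_n\le D_{m+n}$, so
$$N_m+N_n\le N_{m+n}.$$
All three divisors are nef. Lemma \ref{Lemma4}, applied factor by factor, gives
$$f(m+n)\ge \big((N_m+N_n)^a\cdot(-(m+n)\overline A)^b\big).$$
Expand the right side by multilinearity. Every mixed term is an intersection of nef divisors, and these have nonnegative degree. This follows from Lemma \ref{NefLem} together with the standard fact that nef intersections on a projective variety are nonnegative; that fact is needed here because these intersections do not contract to $m_R$, so it is the one input beyond Lemma \ref{NefLem}.

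\textbf{Iterated bound and Fekete-type limit.} Iterating the monotonicity gives $N_{kn}\ge kN_n$, hence $f(kn)\ge k^d f(n)$. More generally, writing $m=qn+s$ with $0\le s<n$,
$$N_m\ge qN_n+N_s,$$
with $N_0:=0$. Dropping the nonnegative cross terms gives
$$f(m)\ge \big((qN_n)^a\cdot(-m\overline A)^b\big)\ge q^a\,m^b\,n^{-b}f(n)\cdot(\text{correction}).$$
To avoid the correction term, I would simply use $(-m\overline A)\ge(-qn\overline A)$ together with Lemma \ref{Lemma4}. This yields $f(m)\ge q^d f(n)$, and therefore
$$\liminf_{m\to\infty}\frac{f(m)}{m^d}\ge \frac{f(n)}{n^d}$$
for every $n$. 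Taking $\limsup$ over $n$ then shows the limit exists, provided the quantities are finite.

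\textbf{Boundedness (the main obstacle).} It remains to find $C$ with $f(n)\le Cn^d$. I would use $N_n=nH-\overline E_n\le nH$, since $\overline E_n$ is effective. The obstacle is that $nH$ need not be nef on $Y$ in a way compatible with Lemma \ref{Lemma4}. In fact $H$ is a pullback of an ample divisor on $\overline X$, so it is nef, and Lemma \ref{Lemma4} applies. Similarly, $-\overline A\le \overline\beta^*H$ because $\overline F$ is effective. Hence
$$f(n)\le n^d(H^d\cdot\overline X)=n^d\deg H.$$
Combined with $f(n)\ge0$, this shows the limit is a finite real number. The care needed is in checking that every comparison is made between nef divisors on a single model and that pullbacks preserve the products.
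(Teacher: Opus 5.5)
Your proof is correct. Its skeleton is the paper's: divisor-level superadditivity $N_m+N_n\le N_{m+n}$ from (\ref{eq3}), comparison via Lemma \ref{Lemma4}, nonnegativity of intersections of nef divisors, and a linear upper bound, all fed into a supremum (Fekete-type) argument.

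It differs from the paper in two places.

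\textbf{Upper bound.} The paper uses $D_n\le nH'$ for an auxiliary ample $H'\ge\overline F$, giving $f(n)\le n^d((H+H')^a\cdot H^b)$. Your observation $N_n=nH-\overline E_n\le nH$ gives $f(n)\le n^d(H^d)$ directly, with no auxiliary divisor.

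\textbf{Integers not divisible by a fixed modulus.} The paper writes $\frac{U_{m_1n}+U_i}{m_1n+i}$ as $\frac{U_{m_1n}}{m_1n}$ plus a perturbation that is not nef. It expands via the polynomial $Q$ and uses (\ref{eq14}) to show that perturbation changes the intersection number by $O(1/(m_1n))$ in absolute value, as in (\ref{eq19}). You instead keep the residue $N_s$ as a nef summand of $qN_n+N_s$ and discard every term containing it, since these are $\ge 0$. You then absorb the change of normalization by homogeneity, getting $f(m)\ge q^d f(n)$ and hence $\liminf_m f(m)/m^d\ge f(n)/n^d$ for every $n$. This one-sided argument needs no error estimate; boundedness is used only to make the supremum finite. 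It is the cleaner route, since the paper's absolute-value control is more than the conclusion requires.

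You are also right that nonnegativity of full intersections of nef divisors on the projective model is needed beyond Lemma \ref{NefLem} as stated. The paper uses this implicitly, in the proof of Lemma \ref{Lemma4} and in asserting $S\subset\RR_{\ge 0}$.

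\textbf{One slip to fix.} The step ``use $-m\overline A\ge -qn\overline A$ together with Lemma \ref{Lemma4}'' is not justified as written. The difference $-s\overline A=s(\overline\beta^*H-\overline F)$ is nef but is not known to be effective. The step is also unnecessary, because there is no correction term. By multilinearity, $((qN_n)^a\cdot(-m\overline A)^b)=q^a m^b n^{-b} f(n)$ exactly. This is $\ge q^d f(n)$ because $f(n)\ge 0$ and $m\ge qn$. Alternatively, the proof of Lemma \ref{Lemma4} goes through verbatim when the differences $F_i-G_i$ are nef rather than effective. With that repaired, your argument is complete.
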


\begin{proof}
$D_n=nF-E_n$ implies $D_n\le nF$ on $B_n$ which implies that $D_n\le n\overline F$ on $\overline B_n$. As explained after (\ref{eq5}), $I\mathcal O_{\overline P'}=\mathcal O_{\overline P'}(-\overline F)$. Let $H'$ be an ample effective divisor on $\overline P'$ such that $\overline F\le H'$. Thus $D_n\le nH'$ for all $n$. $-\overline A\le H$, so 
\begin{equation}\label{eq14}
-n\overline A+D_n\le n(H+H')
\end{equation}
 for all $n$. Thus since $-\overline A$,  $-n\overline A+D_n$, $H$ and $H'$ are nef, for nonnegative integers $a$ and $b$ such that $a+b=d$,
$$
((-n\overline A+D_n)^a\cdot(-n\overline A)^b)\le ((n(H+H'))^a\cdot (-nH)^b)=
n^d((H+H')^a\cdot(H^b))
$$
for all positive $n$, and so,
\begin{equation}\label{eq11}
\frac{((-n\overline A+D_n)^a\cdot(-n\overline A)^b)}{n^d}\le ((H+H')^a\cdot(H^b))
\end{equation}
for all positive $n$. 

 Let $U_n=-n\overline A+D_n$ for $n$ a positive integer
We have that
\begin{equation}\label{eq12}
U_m+U_n=(-m\overline A+D_m)+(-n\overline A+D_n)\le -(m+n)\overline A+D_{m+n}=U_{m+n}
\end{equation} 
for all positive integers $m$ and $n$
by (\ref{eq3}). 

Let
$$
S=\{\frac{(U_n^a\cdot(-n\overline A)^b)}{n^d}\mid n\in \ZZ_{>0}\}.
$$
$S\subset \RR_{\ge 0}$ by Lemma \ref{NefLem} since $-\overline A$ and $U_n$ are nef. Let $c$ be the least upper bound of $S$, which is a finite nonnegative real number by (\ref{eq11}).

Given $\epsilon>0$, there exists $m_1\in \ZZ_{>0}$ such that 
$$
\frac{(U_{m_1}^a\cdot(-m_1\overline A)^b)}{m_1^d}>c-\frac{\epsilon}{2}.
$$
$\frac{U_{nm_1}}{nm_1}\ge \frac{U_{m_1}}{m_1}$ for all $n\in \ZZ_{>0}$ by (\ref{eq12}). Since that $U_i$ and $-\overline A$ are nef,
\begin{equation}\label{eq13}
\frac{(U_{nm_1}^a\cdot(-nm_1\overline A)^b)}{(nm_1)^d}\ge \frac{(U_{m_1}^a\cdot(-m_1\overline A)^b)}{m_1^d}>c-\frac{\epsilon}{2}.
\end{equation}
For $m,i\in \ZZ_{>0}$, we have that 
\begin{equation}\label{eq22}
\begin{array}{lll}
\frac{U_i}{m+i}+\frac{U_m}{m+i}&=&\frac{1}{m+i}U_i+\frac{m}{m+i}(\frac{U_m}{m})\\
&=& \frac{1}{m+i}[U_i-i(\frac{U_m}{m})]+\frac{U_m}{m}
\end{array}
\end{equation}
which is  nef. Let
\begin{equation}\label{eq15}
W_{m_1n,i}:= \left((- \overline A)^b\cdot \left(\frac{U_{m_1n}+U_i}{m_1n+i}\right)^a\right).
\end{equation}
We have that
\begin{equation}\label{eq23}
\begin{array}{lll}
((-\overline A)^b\cdot \left(\frac{U_{m_1n+i}}{m_1n+i}\right)^a)
&\ge&((-\overline A)^b\cdot \left(\frac{U_{m_1n}+U_i}{m_1n+i}\right)^a)\\
&=& \left((-\overline A)^b\cdot \left[\frac{1}{m_1n+i}(U_i-i(\frac{U_{m_1n}}{m_1n}))+\frac{U_{m_1n}}{m_1n}\right]^a\right)
\end{array}
\end{equation}
where the first line is by (\ref{eq12}) and the second line is by (\ref{eq22}).

There exists $Q(x,y,z)\in \ZZ[x,y,z]$ such that setting $\overline x=U_i$, $\overline y=\frac{U_{m_1n}}{m_1n}$ and $\overline z=\frac{1}{m_1n+i}$,
$$
Q(\overline x,\overline y,\overline z)=W_{m_1n,i}-((-\overline A)^b\cdot(\frac{U_{m_1n}}{m_1n})^a).
$$
Expand $Q(x,y,z)=\sum a_{\alpha\beta\gamma}x^{\alpha}y^{\beta}z^{\gamma}$.
We have that $\alpha+\beta=d$ and $\gamma>0$ if $a_{\alpha\beta\gamma}\ne 0$. We have that
$$
0\le((U_i)^{\alpha}\cdot(\frac{U_{m_1n}}{m_1n})^{\beta})\le i^{\alpha}(H+H')^d
$$
by equation (\ref{eq14}). Let $\lambda=(m_1-1)^{\alpha}(H+H')^d$. Thus if $a_{\alpha\beta\gamma}\ne 0$, then 
$$
|\overline x^{\alpha}\overline y^{\beta}\overline z^{\gamma}|\le \lambda\left|\left(\frac{1}{m_1n}\right)\right|
$$
 for $n\in \ZZ_{\ge 0}$ and $0< i<m_1$. Thus given $\delta>0$, for $n\gg 0$,
 \begin{equation}\label{eq19}
 \left|\left((-\overline A)^b\cdot \left(\frac{U_{m_1n}}{m_1n}\right)^a\right)-W_{m_1n,i}\right|<\delta.
 \end{equation}
 Thus for $n\gg 0$ and $0<i<m_1$,
 $$
 \begin{array}{lll}
 c-\frac{\epsilon}{2}&<& ((-\overline A)^b\cdot(\frac{U_{m_1n}}{m_1n})^a)=
 \left| ((-\overline A)^b\cdot(\frac{U_{m_1n}}{m_1n})^a) -W_{m_1n,i}+W_{m_1n,i}\right| \\
 &\le& \left| ((-\overline A)^b\cdot(\frac{U_{m_1n}}{m_1n})^a) -W_{m_1n,i}\right|+\left|W_{m_1n,i}\right| \\
 &<& \frac{\epsilon}{2}+((-\overline A)^b\cdot (\frac{U_{m_1n+i}}{m_1n+i})^a)
  \end{array}
 $$
 where the last inequality is by  (\ref{eq12}) and (\ref{eq19}). Thus
 \begin{equation}\label{eq18}
 c-\epsilon<((-\overline A)^b\cdot (\frac{U_{m_1n+i}}{m_1n+i})^a)<c
 \end{equation}
 for $n\gg 0$ and $0< i<m_1$. Theorem \ref{Theorem2} now follows from (\ref{eq13}) and (\ref{eq18}).
\end{proof}

\section{Limits of Amao multiplicities}\label{SecAmlim}

\begin{Theorem}\label{Theorem3}
Let $R$ be a $d$-dimensional local ring which is essentially of finite type over a field $k$ and $I\subset R$ be an ideal. Then the limit
$$
\lim_{n\rightarrow\infty} \frac{a(I^n,(I^n)^{\rm sat})}{n^d}\in \RR
$$
exists.
\end{Theorem}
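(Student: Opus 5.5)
We reduce Theorem \ref{Theorem3} to Theorem \ref{Theorem2} by writing the Amao multiplicity $a(I^n,(I^n)^{\rm sat})$ via Theorem \ref{Theorem1} as a polynomial expression in intersection numbers of the nef divisors $U_n=-n\overline A+D_n$ and $-n\overline A$ on $\overline B_n$.

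First we reduce to the situation of Section \ref{SecComp}, where $I_p\neq 0$ for every prime $p$ with $\dim R/p=d$. By Remark \ref{Rem1}, applied with $K=I^n$ and $J=(I^n)^{\rm sat}$, the quantity $a(I^n,(I^n)^{\rm sat})$ equals $\sum_i \ell_{R_{P_i}}(R_{P_i})$ times the corresponding Amao multiplicity for $I(R/P_i)$. The sum runs over the minimal primes $P_i$ with $\dim R/P_i=d$. Saturation commutes with passage to $R/P_i$ only up to containment, so we will instead work directly with the strict transforms $[B_i]_d$ as in the proof of that remark. Each component with $IR/P_i=0$ contributes zero, since then $J^n/K^n=0$ there. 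The component with $IR/P_i\ne 0$ is a domain essentially of finite type over $k$, so the hypothesis holds on it. Thus it suffices to prove the limit exists for each $d$-dimensional integral component, and a finite sum of convergent sequences converges. This also yields Corollary (\ref{eq25}).

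Now fix such $R$ and apply Theorem \ref{Theorem1} with $K=I^n$ and $J=(I^n)^{\rm sat}$ on $B_n$. Here $\ell'=\mathcal O(-nF)$ and $\ell=\mathcal O(-E_n)$, so
$$a(I^n,(I^n)^{\rm sat})=\sum_{i=1}^d\left((-nF)^{i-1}\cdot(-E_n)^{d-i}\cdot D_n\right).$$
Each term contains $D_n$, whose support contracts to $m_R$, so we may pass to $\overline B_n$ and use (\ref{eq6}) and (\ref{eq7}). These let us replace $-nF$ by $-n\overline A$ and $-E_n$ by $-n\overline A+D_n=U_n$ in products against $D_n$. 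We must check that this replacement is valid when several factors appear, not just one. It is: $nH$ restricts trivially to any cycle supported over $y_0$, and $D_n\cdot(\text{anything})$ is such a cycle, so $-nF\equiv -n\overline A$ and $-E_n\equiv U_n$ there. Writing $D_n=U_n-(-n\overline A)$, we get
$$a(I^n,(I^n)^{\rm sat})=\sum_{i=1}^d\left((-n\overline A)^{i-1}\cdot U_n^{d-i}\cdot(U_n-(-n\overline A))\right),$$
which telescopes to $(U_n^d)-((-n\overline A)^d)$. The subtracted term equals $n^d((-\overline A)^d)$, which is independent of $\overline B_n$ because $-\overline A$ is pulled back from $\overline P'$. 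By Theorem \ref{Theorem2} with $(a,b)=(d,0)$, the quantity $(U_n^d)/n^d$ has a limit. Hence $a(I^n,(I^n)^{\rm sat})/n^d$ converges to $\lim (U_n^d)/n^d-((-\overline A)^d)$.

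The main obstacle is justifying the transfer from intersection products on $B_n$ over $\mathrm{Spec}(R)$, in Thorup's theory, to products on the projective $k$-variety $\overline B_n$. This transfer is what allows $-nF$ and $-E_n$ to be replaced by the globally defined nef divisors. We would argue that every cycle involved is supported in the fibre over $y_0$, where $\overline B_n$ and $B_n$ agree after the base change $\mathrm{Spec}(R)\to\overline X$. Degrees of zero-cycles over $m_R$ coincide, since both use $\ell_R$ and $[R/m_R:k]$ consistently up to a fixed factor.
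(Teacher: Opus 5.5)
Your proof is correct and follows the paper's argument. It uses Theorem~\ref{Theorem1} with $\ell'=\mathcal O(-nF)$ and $\ell=\mathcal O(-E_n)$, transfers to $\overline B_n$ via (\ref{eq6}), (\ref{eq7}), (\ref{eq17}), writes $D_n=U_n-(-n\overline A)$, and applies Theorem~\ref{Theorem2}. You correctly check that the substitutions stay valid with several factors, since every product contains $D_n$ and $H$ misses the fibre over $y_0$.

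Your differences are bookkeeping improvements. Summing over $i$ telescopes the paper's termwise splitting to $(U_n^d)-n^d((-\overline A)^d)$, so only the case $(a,b)=(d,0)$ of Theorem~\ref{Theorem2} is needed. Your reduction via Remark~\ref{Rem1} to components with $I\not\subseteq P_i$, keeping the graded family $(I^n)^{\rm sat}(R/P_i)$, supplies the standing hypothesis of Section~\ref{SecComp}, which the paper's proof assumes implicitly. However, because of the saturation mismatch you yourself point out, that reduction does not on its own give the Corollary (\ref{eq25}) as you claim.
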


\begin{proof} Let notation be as in Section \ref{SecComp}. By Theorem \ref{Theorem1}, 
$$
a(I^n,(I^n)^{\rm sat})=\frac{1}{d!}(\sum_{i=1}^ds_i(n))
$$
where 
$$
s_i(n)=((-D_n-E_n)^{i-1}\cdot (-E_n)^{d-i}\cdot D_n).
$$
Thus 
$$
s^i(n)=( (-n\overline A)^{i-1}\cdot(-n\overline A+D_n)^{d-i}\cdot D_n))
$$
by equations (\ref{eq6}), (\ref{eq7}) and (\ref{eq17}).
$$
( (-n\overline A)^{i-1}\cdot (-n\overline A+D_n)^{d-i}\cdot D_n))
=((-n\overline A)^{i-1}\cdot(-n\overline A+D_n)^{d-i+1})-((-n\overline A)^{i}\cdot(n\overline A+D_n)^{d-i}).
$$
The limits
$$
\lim_{n\rightarrow\infty}\frac{((-n\overline A)^{i-1}\cdot (-n\overline A+D_n)^{d-i+1})}{n^d}\in \RR
$$
and
$$
\lim_{n\rightarrow\infty}\frac{((-n\overline A)^{i}\cdot(-n\overline A+D_n)^{d-i})}{n^d}\in \RR
$$
exist by Theorem \ref{Theorem2}. Thus $\lim_{n\rightarrow\infty}\frac{s_i(n)}{n^d}\in \RR$ exists, establishing Theorem \ref{Theorem3}.

\end{proof}


\begin{thebibliography}{0}


\bibitem{Am} J. Amao, On a certain Hilbert Polynomial, J. Lond. Math. Soc. 14 (1976), 13 - 20.




\bibitem{C2} S.D. Cutkosky, Asymptotic multiplicities of graded families of ideals and linear series, Adv. Math. 264 (2014), 55 - 113.


\bibitem{C4} S.D. Cutkosky, A general volume = multiplicity formula, Acta Math Vietnam 40, 139 - 147, (2015).
\bibitem{C5} S.D. Cutkosky, Multiplicities of graded families of ideals on Noetherian local rings, arXiv:2603.06844.

\bibitem{CHST} S.D. Cutkosky, T. Ha, H. Srinivasan and E. Theodorescu, Asymptotic behavior of length of local cohomology, Canad. J. Math. 57 (2005), 1178 - 1192.

\bibitem{CL} S.D. Cutkosky and S. Landsittel, Epsilon multiplicity is a limit of Amao multiplicities,  J. Algebra Appl. 24 (2025).

\bibitem{CS} S.D. Cutkosky and P. Sarkar, Epsilon Multiplicity and Analytic Spread of Filtrations,  Illinois J. Math. 68 (2024), 189 - 210. 

\bibitem{SD} Sudipta Das, A Volume = Multiplicity formula for $p$-families of ideals, Proc. AMS. 151 (2023), 4153 - 4161.

\bibitem{DM} Sudipta Das and Cheng Meng, Asymptotic colengths for families of ideals: an analytic approach, arXiv:2410.1191v1.

\bibitem{sD} Suprajo Das, Epsilon multiplicity for graded algebras, J. Pure Appl. Algebra 225, 2021

\bibitem{DDRV} Suprajo Das, S. Dubey, S. Roy and  J. Verma, Computing epsilon multiplicities in graded algebras, arXiv:2402.11935

\bibitem{DRT} Suprajo Das, S. Roy and V. Trivedi, Density functions for epsilon multiplicity and families of ideals, arXiv:2311.17679




\bibitem{ELS} L. Ein, R. Lazarsfeld and K. Smith, Uniform Approximation of Abhyankar valuation ideals in smooth function fields, Amer. J. Math. 125 (2003), 409-440.


\bibitem{KT} S. Kleiman and A. Thorup, A geometric theory of the Buchsbaum-Rim multiplicity, J. Alg. 167, 168 - 231 (1994).

\bibitem{L} S. Landsittel, Some formulas for epsilon multiplicity in local rings,  Com. in Algebra 54 (2026), 1962 - 1975.

\bibitem{Laz1} R. Lazarsfeld, Positivity in Algebraic Geometry I, Springer, 2000.

\bibitem{LM} R. Lazarsfeld and M. Musta\c{t}\u{a}, Convex bodies associated to linear series, Ann. Sci. Ec. Norm. Super 42 (2009) 783 - 835.

\bibitem{Ma} H. Matsumura, Commutative Algebra, second edition, Benjamin Cummings, Reading Massachusetts, 1980.

\bibitem{M1} M. Musta\c{t}\u{a}, On multiplicities of graded sequences of ideals, J. Algebra 256 (2002), 229-249.


\bibitem{R} D. Rees, Amao's theorem and reduction criteria, J. Lond. Math. Soc. 32 (1985), 404 - 410.

\bibitem{PS} P. Sarkar, Multiplicities of weakly graded families of ideals, arXiv:2411.04831v2.



\bibitem{HS} I. Swanson and C. Huneke, integral closure of ideals, rings and modules, Cambridge Univ. Press.

\bibitem{Th} A. Thorup, Rational equivalence theory on arbitrary Noetherian schemes, In Enumerative Geometry, Sebastian
Xambó-Descamps Ed, LNM 1436, Springer, 1990.

\bibitem{UV} B. Ulrich and J. Validashti, Numerical criteria for integral dependence, Math. Proc. Camb. Phil. Soc. 151 (2011), 95-102.



\end{thebibliography}
\end{document}